\def\FF{\mathbb{F}}
\newtheorem{theorem}{Theorem}
\newtheorem{lemma}{Lemma}
\newtheorem{corollary}{Corollary}
\newtheorem{example}{Example}
\newtheorem{remark}{Remark}
\renewcommand{\vec}[1]{\mathbf{#1}}
\newcommand{\trans}{\intercal}
\newcommand*{\algrule}[1][\algorithmicindent]{\makebox[#1][l]{\hspace*{.5em}\vrule height .75\baselineskip depth .25\baselineskip}}%
\def\ALG@printindent{%
    \ifnum \theALG@nested>0
    \ifx\ALG@text\ALG@x@notext
    \addvspace{-3pt}
    \else
    \unskip
    \ALG@printindent@tempcnta=1
    \loop
    \algrule[\csname ALG@ind@\the\ALG@printindent@tempcnta\endcsname]%
    \advance \ALG@printindent@tempcnta 1
    \ifnum \ALG@printindent@tempcnta<\numexpr\theALG@nested+1\relax
    \repeat
    \fi
    \fi
}%
\patchcmd{\ALG@doentity}{\noindent\hskip\ALG@tlm}{\ALG@printindent}{}{\errmessage{failed to patch}}
\title{On completion of a linearly independent set to a basis with shifts of a fixed vector}
\author{Marek Rychlik\\
  University of Arizona\\
  Department of Mathematics, 617 N Santa Rita Rd, P.O. Box 210089\\
  Tucson, AZ 85721-0089, USA\\}
\date{\today}
\date{\today}
\subjclass[2010]{%
  11C08,         
  11C20          
}
\begin{document}
\maketitle
\begin{abstract}
  Let $\FF$ be an infinite field. Let $n$ be a positive integer and
  let $1\leq d\leq n$.  Let
  $\vec{f}_1,\vec{f}_2,\ldots,\vec{f}_{d-1}\in\FF^{n}$ be $d-1$
  linearly independent vectors. Let
  $\vec{x}=(x_1,x_2,\ldots,x_{d},0,0,\ldots,0)\in\FF^{n}$, with $n-d$
  zeros at the end.  Let $\vec{R}:\FF^{n}\to\FF^{n}$ be the cyclic
  shift operator to the right, e.g.
  $\vec{R}\,\vec{x}=(0,x_1,x_2,\ldots,x_{d},0,0,\ldots,0)$.  Is there
  a vector $\vec{x}\in\FF^{n}$, such that the $n-d+1$ vectors
  $\vec{x},\vec{R}\vec{x},\ldots,\vec{R}^{n-d}\vec{x}$ complete the
  set $\{\vec{f}_j\}_{j=1}^{d-1}$ to a basis of $\FF^{n}$? The answer
  is in the affirmative for every linearly independent set of
  $\vec{f}_j$, $j=1,2,\ldots,d-1$. In order to prove this fact, we
  prove that the $(n-d+1)\times(n-d+1)$ minors of the
  $(n-d+1)\times(n-d+1)$ circulant matrix
  $\begin{bmatrix}\vec{x},\vec{R}\vec{x},\ldots,\vec{R}^{n-d}\vec{x}\end{bmatrix}^\trans$
  form a Gr\"obner basis with respect to the graded reverse
  lexicographic order (grevlex).
\end{abstract}
\section{Background}
Let $\FF$ be an infinite field. Therefore, every polynomial
$f\in\FF[x_1,x_2,\ldots,x_d]$ is either $0$ or is not identically $0$
as a function.

Let $n$ be a positive integer and let
$1\leq d\leq n$.  Let
\[ \mathcal{F}=\{\vec{f}_1,\vec{f}_2,\ldots,\vec{f}_{d-1}\}\subseteq\FF^{n} \]
be a subset of $d-1$ linearly independent vectors. In many problems we
need to complete this set with $n-d+1$ linearly independent vectors to
a basis of $\FF^{n}$. Furthermore, we may need the completing
vectors to satisfy some constraints.  For instance, we can always
pick the completing vectors from the standard basis of $\FF^{n}$.

In the current paper we constrain the completing set to shifts of a
fixed vector with a contiguous block of $n-d$ zeros.  More precisely, let
\begin{equation}
  \label{eqn:x-form}
  \vec{x}=(\underbrace{x_1,x_2,\ldots,x_{d}}_{\text{$d$ entries}},
  \underbrace{0,0,\ldots,0}_{\text{$n-d$ zeros}})\in\FF^{n}.
\end{equation}
Let $\vec{R}:\FF^{n}\to\FF^{n}$ be the cyclic shift operator to
the right. Thus for $k=0,1,\ldots,n-d$:
\[\vec{R}^k\,\vec{x}=(\underbrace{0,0,\ldots,0}_{\text{$k$ zeros}},
  \underbrace{x_1,\ldots,x_{d}}_{\text{$d$ entries}},
  \underbrace{0,0,\ldots,0}_{\text{$n-d-k$ zeros}}).
\]
The main objective of the current paper is in the theorem below:
\begin{theorem}
  \label{theorem:main}
  For every integer $n$ and $d$, $1\leq d \leq n$ and a linearly independent set 
  \[ \mathcal{F}=\left\{\vec{f}_j\right\}_{j=1}^{d-1} \subseteq\FF^n\]
  there is a vector $\vec{x}\in\FF^{n}$ of the form~\eqref{eqn:x-form}
  such that the set of $n-d+1$ vectors
  \[ \left\{ \vec{x},\vec{R}\vec{x},\ldots,\vec{R}^{n-d}\vec{x}\right\}\]
  completes the set $\mathcal{F}$ to a basis of $\FF^{n}$.
\end{theorem}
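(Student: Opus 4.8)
The plan is to recast the existence statement as the non-vanishing of a single polynomial in $x_1,\ldots,x_d$ and then to analyze that polynomial through the combinatorics of the circulant minors. Fix $\mathcal{F}=\{\vec{f}_j\}_{j=1}^{d-1}$ and let $M(\vec{x})$ be the $n\times n$ matrix whose first $d-1$ rows are $\vec{f}_1,\ldots,\vec{f}_{d-1}$ and whose last $n-d+1$ rows are $\vec{x},\vec{R}\vec{x},\ldots,\vec{R}^{n-d}\vec{x}$. The asserted completion holds for a given $\vec{x}$ of the form~\eqref{eqn:x-form} exactly when $\det M(\vec{x})\neq 0$, and $p(x_1,\ldots,x_d):=\det M(\vec{x})$ is a polynomial in $x_1,\ldots,x_d$. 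Since $\FF$ is infinite, a nonzero polynomial does not vanish identically as a function (as recalled in the Background), so it suffices to show that $p$ is not the zero polynomial.

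To do this I would expand $\det M(\vec{x})$ by the generalized Laplace rule along its last $n-d+1$ rows. Writing $C$ for the $(n-d+1)\times n$ matrix with rows $\vec{x},\vec{R}\vec{x},\ldots,\vec{R}^{n-d}\vec{x}$ and $F$ for the $(d-1)\times n$ matrix with rows $\vec{f}_1,\ldots,\vec{f}_{d-1}$, this reads
\[
  p \;=\; \sum_{\substack{S\subseteq\{1,\ldots,n\}\\ |S|=n-d+1}} \varepsilon_S\,\Delta_{S^{c}}(F)\,m_S(\vec{x}),
\]
where $m_S(\vec{x})$ is the $(n-d+1)\times(n-d+1)$ minor of $C$ on columns $S$, $\Delta_{S^{c}}(F)$ is the $(d-1)\times(d-1)$ minor of $F$ on the complementary columns, and $\varepsilon_S\in\{\pm1\}$ is a sign not depending on $\vec{x}$ or $\mathcal{F}$. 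Since $\mathcal{F}$ is linearly independent, $F$ has rank $d-1$, so $\Delta_T(F)\neq 0$ for some $(d-1)$-subset $T$, and then the coefficient of $m_{T^{c}}(\vec{x})$ in the sum is nonzero. Hence $p$ is a nonzero $\FF$-linear combination of the minors $\{m_S(\vec{x})\}_{|S|=n-d+1}$, and it is enough to prove that this family is linearly independent over $\FF$; this independence is the substance of the Gr\"obner basis assertion in the abstract.

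For the independence I would compute the grevlex leading monomial of each $m_S$. Each $m_S$ is homogeneous of degree $n-d+1$, and if $S=\{j_1<\cdots<j_{n-d+1}\}$ then the entry of $C$ in row $k$ (for $k=0,\ldots,n-d$) and column $j_\ell$ equals $x_{j_\ell-k}$ when $1\le j_\ell-k\le d$ and is $0$ otherwise; the term of the identity permutation in the Leibniz expansion of $m_S$ is therefore $\prod_{k=0}^{n-d}x_{j_{k+1}-k}$, and it is nonzero because $k+1\le j_{k+1}\le k+d$ always. The key lemma is that this term strictly dominates, in grevlex, the term of every other permutation: a nonzero term that is not the identity's has an adjacent inversion, and swapping the two offending columns produces another nonzero term with one fewer inversion and a strictly grevlex-larger monomial (the swap preserves the total degree but lowers the larger of the two affected variable indices, which is exactly what grevlex rewards). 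Iterating reduces to the identity, so $\mathrm{LM}(m_S)=\prod_{k=0}^{n-d}x_{j_{k+1}-k}$ with leading coefficient $1$. The sequence $c_k:=j_{k+1}-k$ is nondecreasing with values in $\{1,\ldots,d\}$, and $S\mapsto(c_0,\ldots,c_{n-d})$ is a bijection from the $(n-d+1)$-subsets of $\{1,\ldots,n\}$ onto the nondecreasing length-$(n-d+1)$ words over $\{1,\ldots,d\}$, hence onto the degree-$(n-d+1)$ monomials in $x_1,\ldots,x_d$, both sets having cardinality $\binom{n}{d-1}$. Pairwise distinct leading monomials give $\FF$-linear independence of the $m_S$, which completes the proof of the theorem; and since the leading monomials then exhaust all monomials of that degree, the $m_S$ form a triangular basis of the degree-$(n-d+1)$ component of $\FF[x_1,\ldots,x_d]$, so the ideal they generate is $\langle x_1,\ldots,x_d\rangle^{n-d+1}$, whose initial ideal equals itself and is generated by the $\mathrm{LM}(m_S)$; therefore $\{m_S\}$ is a grevlex Gr\"obner basis.

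I expect the leading-monomial lemma to be the one real obstacle. One has to keep track of which permutation terms of $m_S$ are nonzero and verify the precise grevlex comparison produced by an adjacent transposition---both that the swapped term is again a valid (nonzero) term and that the highest-index coordinate on which the two exponent vectors differ carries the correct sign. The Laplace expansion, the rank argument, the passage to the initial ideal, and the stars-and-bars bijection are all routine once this lemma is in hand.
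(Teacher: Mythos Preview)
Your proposal is correct and follows essentially the same route as the paper: Laplace expansion of $\det M$ along the circulant block, reduction via the rank of $F$ to linear independence of the minors $m_S$, and proof of that independence by showing that the grevlex leading monomial of $m_S$ is the diagonal product via an adjacent-transposition/swap argument, yielding a bijection onto all degree-$(n-d+1)$ monomials. The paper's Lemma~\ref{lemma:leading-monomials} carries out exactly the swap computation you flag as the one real obstacle, verifying both that the swapped term stays in range and that the exponent of the highest-indexed affected variable drops.
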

\begin{proof}
  Split into a number of lemmas that follow.
\end{proof}

The pursuit of the proof led us to considerations in ideal theory in
the ring of multivariate polynomials. In particular, we find a
Gr\"obner basis for an ideal generated by the minors of a circulant
matrix. We proceed to briefly outline the transition 
to ideal theory which allows us to prove Theorem~\ref{theorem:main}.
We will only need a minimum background from Gr\"obner basis theory
which can be found in any general reference on the subject
(e.g. \cite{cox-little-oshea, becker-weispfenning}).

The main result can be formulated in matrix form by considering the square
$n\times n$ matrix of this special form:
\[ \vec{M} = \begin{bmatrix}
    x_1 & x_2 & \ddots & x_{d} & 0 & 0 & \ddots & 0 \\
    0   & x_1 & x_2 & \ddots & x_{d} & 0 & \ddots & 0 \\    
    \ddots & \ddots & \ddots & \ddots & \ddots & \ddots &\ddots&\ddots\\
    0   & 0  & 0    & \ddots & \ddots & \ddots & x_{d} & 0\\
    0   & 0  & 0    & \ddots & \ddots & \ddots & x_{d-1} & x_{d}\\
    f_{1,1} & f_{1,2} & \ldots & \ldots & \ldots & \ldots & f_{1,n-1}& f_{1,n} \\
    f_{2,1} & f_{2,2} & \ldots & \ldots & \ldots & \ldots & f_{2,n-1}& f_{2,n} \\
    \ldots & \ldots & \ldots & \ldots & \ldots & \ldots &\ldots&\ldots\\
    f_{d-1,1} & f_{d-1,2} & \ldots & \ldots & \ldots & \ldots & f_{d-1,n-1}& f_{d-1,n} \\
  \end{bmatrix} = \begin{bmatrix}
    \vec{X}\\
    \vec{F}
  \end{bmatrix}
\]
where $\vec{X}$ is a $(n-d+1)\times n$ matrix and $\vec{F}$ is a $(d-1)\times n$ matrix.
If for every $\vec{F}$ of rank $d-1$ we can find $\vec{x}$ such that
$\vec{M}$ is non-singular then the proof of Theorem~\ref{theorem:main} follows.

We make an observation that $\vec{X}$ is a special Toeplitz, circulant matrix \cite{toeplitz-matrix}.
\begin{example}
  We  explicitly write down the matrix $\vec{M}$ for $n=6$ and $d=4$:
  \[ \vec{M} = \begin{bmatrix}
      x_1 & x_2 & x_3 & x_{4} & 0 & 0 \\
      0   & x_1 & x_2 & x_3 & x_{4} & 0\\
      0   &   0 & x_1 & x_2 & x_3 & x_{4}\\    
      f_{1,1} & f_{1,2} & f_{1,3} & f_{1,4} & f_{1,5} & f_{1,6} \\
      f_{2,1} & f_{2,2} & f_{2,3} & f_{2,4} & f_{2,5} & f_{2,6} \\
      f_{3,1} & f_{3,2} & f_{3,3} & f_{3,4} & f_{3,5} & f_{3,6}
    \end{bmatrix}.
  \]
\end{example}

We consider the matrix
\[ \vec{X} =
  \begin{bmatrix}\vec{x},\vec{R}\vec{x},\ldots,\vec{R}^{n-d}\vec{x}\end{bmatrix}^\trans
\]
obtained by putting $\vec{x}$ and its shifts in the rows of the matrix
$\vec{X}$, i.e. the $(n-d+1)\times n$ circulant matrix
\[
\vec{X} = \begin{bmatrix}
    x_1 & x_2 & \ddots & x_{d} & 0 & 0 & \ddots & 0 \\
    0   & x_1 & x_2 & \ddots & x_{d} & 0 & \ddots & 0 \\    
    \ddots & \ddots & \ddots & \ddots & \ddots & \ddots &\ddots&\ddots\\
    0   & 0  & 0    & \ddots & \ddots & \ddots & x_{d} & 0\\
    0   & 0  & 0    & \ddots & \ddots & \ddots & x_{d-1} & x_{d}
  \end{bmatrix}
\]
The symbols $x_1,x_2,\ldots,x_d$ are variables with range $\FF$.
Thus, the minors of the matrix $\vec{X}$ are polynomials in the
ring of polynomials $\FF[x_1,x_2,\ldots,x_d]$.

\begin{lemma}
  Let $\mathcal{P}_q^{n}$ denote the family of all subsets $H\subseteq\{1,2,\ldots,n\}$
  with $q$ elements.
  The determinant of the special matrix $\vec{M}$ is a homogeneous
  polynomial in variables $x_1,x_2,\ldots, x_d$ of degree $n-d$
  and
  \begin{equation}
    \label{eqn:laplace-expansion}
    \det \vec{M} = \sum_{H\in\mathcal{P}^{n}_{n-d+1}} \varepsilon^H\,\det \vec{X}_H \, \det\vec{F}_{H'}.
  \end{equation}
  where $\varepsilon^H=\pm 1$ and
  \begin{enumerate}
  \item $\vec{X}_H$ is the submatrix of $\vec{X}$ obtained by taking a subset of $n-d+1$
    columns of $\vec{X}$ with indices in the subset $H$;
  \item $\vec{F}_{H'}$ is the submatrix of $\vec{F}$ obtained from
    $\vec{F}$ by taking $d-1$ columns with indices in the complement $H'$ of
    $H$.
  \end{enumerate}
\end{lemma}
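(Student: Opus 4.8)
The plan is to obtain~\eqref{eqn:laplace-expansion} as an instance of the generalized Laplace expansion of a determinant along a fixed block of rows, and then to read off homogeneity from the observation that every entry of $\vec{X}$ is either a variable $x_k$ or $0$, while every entry of $\vec{F}$ is a scalar in $\FF$.

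Concretely, set $r = n-d+1$, so that $\vec{M}$ is the $n\times n$ matrix whose first $r$ rows form $\vec{X}$ and whose last $d-1$ rows form $\vec{F}$. Starting from the Leibniz formula $\det\vec{M}=\sum_{\sigma\in S_n}\operatorname{sgn}(\sigma)\prod_{i=1}^{n}M_{i,\sigma(i)}$, I would partition the permutations $\sigma\in S_n$ according to the $r$-element set $H=\{\sigma(1),\ldots,\sigma(r)\}\in\mathcal{P}^{n}_{r}$ of images of the first $r$ indices. For a fixed $H$, the corresponding partial sum factors: the contributions of rows $1,\ldots,r$ range over all bijections $\{1,\ldots,r\}\to H$, and those of rows $r+1,\ldots,n$ range over all bijections $\{r+1,\ldots,n\}\to H'$, where $H'=\{1,\ldots,n\}\setminus H$. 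Sorting the columns of each block into increasing order turns these two factors into $\det\vec{X}_{H}$ and $\det\vec{F}_{H'}$ exactly, and the sign picked up in that reordering, together with $\operatorname{sgn}(\sigma)$, collapses into a single factor $\varepsilon^{H}=(-1)^{\binom{r+1}{2}+\sum_{h\in H}h}\in\{\pm1\}$ depending only on $H$; summing over $H\in\mathcal{P}^{n}_{n-d+1}$ then gives~\eqref{eqn:laplace-expansion}. This is nothing but the classical Laplace expansion by complementary minors along several rows, so one could equally well cite it from any standard reference on determinants.

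For homogeneity, expand $\det\vec{X}_{H}=\sum_{\pi\in S_r}\operatorname{sgn}(\pi)\prod_{i=1}^{r}(\vec{X}_{H})_{i,\pi(i)}$: each summand is a product of $r$ entries of $\vec{X}$, each of which is $0$ or one of $x_1,\ldots,x_d$, so every non-vanishing summand is a monomial of degree exactly $r=n-d+1$; hence $\det\vec{X}_{H}$ is either the zero polynomial or homogeneous of degree $n-d+1$. Since $\varepsilon^{H}\det\vec{F}_{H'}\in\FF$, every term of~\eqref{eqn:laplace-expansion} is $0$ or homogeneous of degree $n-d+1$, and therefore so is $\det\vec{M}$. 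I do not expect a genuine obstacle here: the expansion is classical, and the only points that require care are the sign bookkeeping for $\varepsilon^{H}$ (routine once each block is written with columns in increasing order) and the remark that homogeneity of a fixed degree is preserved under $\FF$-linear combinations, so that the vanishing of some individual minors $\det\vec{X}_{H}$ does not spoil the conclusion.
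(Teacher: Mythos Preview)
Your approach is essentially the same as the paper's, which simply invokes Laplace's expansion by complementary minors along the first $n-d+1$ rows; you supply more detail than the paper does, including an explicit formula for $\varepsilon^{H}$ and the homogeneity argument. Note that the degree $n-d+1$ you compute is the correct one, and the ``$n-d$'' in the statement appears to be a typo in the paper.
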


\begin{proof}
  We use Laplace's expansion of the determinant by complementary minors
  \cite{laplace-expansion}. We use the expansion along the
  first $n-d+1$ rows. 
\end{proof}

\begin{lemma}
  Let $\vec{f}_j\in\FF^n$, $j=1,2,\ldots,d-1$  be an arbitrary linearly independent set.
  Let the set
  \begin{equation}
    \label{eqn:toeplitz-minors}
    \left\{\det\vec{X}_H\right\}_{H\in \mathcal{P}_{n-d+1}^{n}}
  \end{equation}
  be a linearly independent set in $\FF_{n-d+1}[x_1,x_2,\ldots,x_d]$, the
  vector space of homogenous polynomials of degree $n-d+1$.
  Then there exists a vector $\vec{x}\in\FF^n$ with $n-d$ zeros at the end such that
  $\det\vec{M}\neq 0$. 
\end{lemma}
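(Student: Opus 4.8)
The plan is to deduce the statement almost directly from the Laplace expansion~\eqref{eqn:laplace-expansion}, the linear-independence hypothesis, and the fact that $\FF$ is infinite. The first step is to read~\eqref{eqn:laplace-expansion} as an identity in the polynomial ring $\FF[x_1,x_2,\ldots,x_d]$: setting $c_H := \varepsilon^H\det\vec{F}_{H'}$ we obtain
\begin{equation}
  \label{eqn:detM-combination}
  \det\vec{M} \;=\; \sum_{H\in\mathcal{P}^{n}_{n-d+1}} c_H\,\det\vec{X}_H ,
\end{equation}
and the essential observation is that each $c_H$ is a \emph{scalar}: it is a minor of the constant matrix $\vec{F}$ and hence does not involve $x_1,\ldots,x_d$. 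Thus $\det\vec{M}$ is an $\FF$-linear combination of the circulant minors $\det\vec{X}_H$.

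Second, I would check that this linear combination is nontrivial. The map $H\mapsto H'$ is a bijection from $\mathcal{P}^{n}_{n-d+1}$ onto the family of $(d-1)$-element subsets of $\{1,\ldots,n\}$, and since $\vec{F}$ has rank $d-1$, at least one $(d-1)\times(d-1)$ submatrix $\vec{F}_{H'}$ is invertible. For the corresponding $H$ we get $c_H=\pm\det\vec{F}_{H'}\neq 0$, so not all of the coefficients in~\eqref{eqn:detM-combination} vanish.

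Third, I would invoke the hypothesis that $\{\det\vec{X}_H\}_{H\in\mathcal{P}^{n}_{n-d+1}}$ is a linearly independent set: together with the previous step this forces $\det\vec{M}$, viewed as a polynomial in $\FF[x_1,\ldots,x_d]$, to be nonzero. Because $\FF$ is infinite, a nonzero polynomial is not identically zero as a function on $\FF^{d}$ (as recalled at the start of the paper), so there is a point $(x_1,\ldots,x_d)\in\FF^{d}$ at which $\det\vec{M}$ does not vanish. Taking $\vec{x}=(x_1,\ldots,x_d,0,\ldots,0)\in\FF^{n}$ --- which is exactly the prescribed form~\eqref{eqn:x-form} --- gives $\det\vec{M}\neq 0$, as required.

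As for difficulty: once the Laplace expansion is in hand this lemma is essentially bookkeeping, the only delicate points being that the $c_H$ are genuinely constant in the $x_i$ and that at least one of them is nonzero --- the latter is precisely where the rank-$(d-1)$ assumption on $\vec{F}$ enters. The real content, and the expected main obstacle for the paper as a whole, is the standing hypothesis of this lemma, namely that the circulant minors $\det\vec{X}_H$ are $\FF$-linearly independent; establishing that is what the Gr\"obner basis computation announced in the abstract is for.
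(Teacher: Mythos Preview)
Your proof is correct and follows essentially the same approach as the paper: both use the Laplace expansion~\eqref{eqn:laplace-expansion} together with the linear-independence hypothesis and the rank-$(d-1)$ assumption on $\vec{F}$ to conclude that $\det\vec{M}$ is a nonzero polynomial, then invoke infiniteness of $\FF$. The only cosmetic difference is that the paper phrases it as a proof by contradiction (assuming $\det\vec{M}$ vanishes identically and deducing $\rank\vec{F}<d-1$), whereas you argue directly.
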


\begin{proof}
  By way of contradition, we assume that $\det\vec{M}=0$ for all
  $\vec{x}\in\FF^n$ whose last $n-d$ coordinates are $0$.  This
  implies that $\det\vec{M}\in \FF[x_1,x_2,\ldots,x_d]$ is a
  polynomial identically equal to $0$ as a function. As $\FF$ is an
  infinite field, this polynomial is a zero polynomial. Therefore,
  linear independence of the set~\eqref{eqn:toeplitz-minors} and
  equation~\eqref{eqn:laplace-expansion} imply that
  $\det\vec{F}_{H'}=0$ for all $H\in\mathcal{P}_{n-d+1}^n$.  Since
  $H'$ traverses all sets of cardinality $d-1$, this in turn implies
  that the rank of the matrix $\vec{F}$ is strictly less than $d - 1$,
  i.e. vectors $\vec{f}_j$, $j=1,2,\ldots,d-1$, are linearly
  dependent. This contradicts the assumptions.
\end{proof}
\begin{corollary}
  Theorem~\ref{theorem:main} will follow once we prove
  the independence of the set~\eqref{eqn:toeplitz-minors}.
\end{corollary}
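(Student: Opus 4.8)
The corollary itself needs almost nothing: if the set~\eqref{eqn:toeplitz-minors} is linearly independent, the preceding lemma produces a vector $\vec{x}$ of the form~\eqref{eqn:x-form} with $\det\vec{M}\neq 0$, so $\vec{M}$ is invertible and its rows $\vec{x},\vec{R}\vec{x},\ldots,\vec{R}^{n-d}\vec{x},\vec{f}_1,\ldots,\vec{f}_{d-1}$ form a basis of $\FF^{n}$, which is precisely the assertion of Theorem~\ref{theorem:main}. So the whole matter reduces to proving that the maximal minors $\{\det\vec{X}_H\}_{H\in\mathcal{P}_{n-d+1}^{n}}$ are linearly independent in $\FF[x_1,x_2,\ldots,x_d]$. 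The plan is to do this by identifying the leading monomial of each minor under the graded reverse lexicographic order and checking that distinct subsets $H$ yield distinct leading monomials; polynomials with pairwise distinct leading monomials are automatically linearly independent.

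Fix grevlex with $x_1>x_2>\cdots>x_d$ and write a generic subset as $H=\{h_1<h_2<\cdots<h_{n-d+1}\}$. First I would read off $\vec{X}_H$: its $(i,k)$ entry is $x_{h_k-i+1}$ when $1\le h_k-i+1\le d$ and $0$ otherwise, so $\vec{X}_H$ is a banded matrix and its Leibniz expansion is a signed sum of monomials indexed by the admissible matchings of rows to selected columns, that is, the matchings using no zero entry. Since $i\le h_i\le i+d-1$ holds for every $i$ (the lower bound is immediate, the upper bound follows from $h_i\le n-(n-d+1)+i$), the diagonal matching row $i\leftrightarrow$ column $h_i$ is admissible and contributes the monomial $\mu_H:=\prod_{i=1}^{n-d+1}x_{h_i-i+1}$ with coefficient $+1$. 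The crux is the next step: showing that every other admissible matching contributes a strictly grevlex-smaller monomial. I would argue this by an ``uncrossing'' exchange: if a matching sends rows $i<j$ to columns $a>b$ of $H$, reassign $i\mapsto b$ and $j\mapsto a$; using the band widths one checks the new matching is still admissible, and it replaces the variable pair $x_{a-i+1},x_{b-j+1}$ by $x_{b-i+1},x_{a-j+1}$ --- a pair with the same sum of indices but with the largest index $a-i+1$ removed --- so the exponent vector strictly drops at its highest modified coordinate, which under grevlex means the monomial strictly increases. Repeatedly uncrossing adjacent inversions drives any admissible matching to the diagonal one while strictly raising the grevlex value at each step, so $\mu_H$ is the unique leading monomial of $\det\vec{X}_H$.

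The final step is purely combinatorial: $H\mapsto(h_1-1,h_2-2,\ldots,h_{n-d+1}-(n-d+1))$ is a bijection from $\mathcal{P}_{n-d+1}^{n}$ onto the non-decreasing sequences in $\{0,1,\ldots,d-1\}^{\,n-d+1}$, equivalently onto the size-$(n-d+1)$ multisets drawn from $\{1,\ldots,d\}$, equivalently onto the monomials of degree $n-d+1$ in $x_1,x_2,\ldots,x_d$; under this bijection $H$ corresponds exactly to $\mu_H$, and both sides have cardinality $\binom{n}{d-1}$. Hence the leading monomials of the minors are pairwise distinct --- in fact they exhaust the monomial basis of the degree-$(n-d+1)$ homogeneous component --- so the minors are linearly independent (indeed a basis of that component, and, after the routine verification that the $S$-polynomials reduce to zero, the grevlex Gr\"obner basis advertised in the abstract). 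Combined with the preceding lemma this proves the corollary and Theorem~\ref{theorem:main}. The genuinely delicate point is the middle step: grevlex is not monotone under naive greedy choices, so one must check carefully, against the precise banded structure of $\vec{X}$, that the uncrossing move both preserves admissibility and always moves the monomial upward.
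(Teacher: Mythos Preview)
Your proof is correct and follows the same route as the paper. The corollary itself is immediate from the preceding lemma (as you note in your first paragraph), and your additional argument for the linear independence of the minors---identifying the grevlex leading monomial of $\det\vec{X}_H$ as the diagonal product via an adjacent-swap/uncrossing argument and then bijecting $H\in\mathcal{P}_{n-d+1}^{n}$ with the degree-$(n-d+1)$ monomials---is precisely the content and method of the paper's Lemma~\ref{lemma:leading-monomials}.
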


\section{Ideal theory considerations}
The objective of this section is to prove
\begin{theorem}
  \label{theorem:grobner-theorem}
  The $(n-d+1)\times(n-d+1)$ minors of the circulant matrix $\vec{X}$
  form a Gr\"obner basis with respect to the graded reverse lexicographic order (grevlex).
\end{theorem}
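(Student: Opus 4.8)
The plan is to determine the grevlex leading term of every maximal minor of $\vec{X}$ explicitly; once that is known, the statement follows almost formally. Throughout, put $m:=n-d+1$, so $\vec{X}$ is an $m\times n$ matrix and the minors in question are its $m\times m$ minors $\det\vec{X}_H$, indexed by $H=\{j_1<\cdots<j_m\}\in\mathcal{P}^{n}_{m}$. I claim that, for grevlex with $x_1>x_2>\cdots>x_d$ and for every $H$,
\[
  \mathrm{LT}\!\left(\det\vec{X}_H\right)\;=\;\mu_H\;:=\;\prod_{k=1}^{m}x_{\,j_k-k+1}.
\]
To see that proving this identity suffices, set $\lambda_k:=j_k-k$: then $0\le\lambda_1\le\lambda_2\le\cdots\le\lambda_m\le n-m=d-1$, so $(\lambda_1,\dots,\lambda_m)$ is a size-$m$ multiset in $\{0,\dots,d-1\}$, and $H\mapsto\mu_H=\prod_k x_{\lambda_k+1}$ is a bijection from $\mathcal{P}^{n}_{m}$ onto the set of all $\binom{n}{d-1}$ monomials of degree $m$ in $x_1,\dots,x_d$. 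Hence the minors have pairwise distinct leading terms, so they are $\FF$-linearly independent --- which also supplies the independence claimed in the previous section and thereby completes Theorem~\ref{theorem:main} --- and their leading terms exhaust all degree-$m$ monomials, so $\langle\mathrm{LT}(\det\vec{X}_H):H\in\mathcal{P}^n_m\rangle=(x_1,\dots,x_d)^{m}$. On the other hand, every element of the ideal $I$ generated by the (homogeneous, degree-$m$) minors has all of its terms of degree $\ge m$, whence $\mathrm{in}(I)\subseteq(x_1,\dots,x_d)^{m}$ with respect to grevlex; combining the two inclusions gives $\mathrm{in}(I)=\langle\mathrm{LT}(\det\vec{X}_H):H\rangle$, which is precisely the assertion that the minors form a Gr\"obner basis.

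It remains to prove the leading-term formula. First I would expand by the Leibniz rule,
\[
  \det\vec{X}_H=\sum_{\sigma\in S_m}\operatorname{sgn}(\sigma)\prod_{k=1}^{m}X_{\sigma(k),\,j_k},
  \qquad X_{i,j}=x_{\,j-i+1}\ \text{ if }\ 1\le j-i+1\le d,\quad X_{i,j}=0\ \text{ otherwise},
\]
and call $\sigma\in S_m$ \emph{admissible} (for $H$) when every factor is nonzero, i.e.\ when $\max(1,j_k-d+1)\le\sigma(k)\le\min(m,j_k)$ for all $k$; such a $\sigma$ contributes the monomial $\mu_\sigma:=\prod_k x_{\,j_k-\sigma(k)+1}$. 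The identity is always admissible, since $k\le j_k$ (the $j_k$ being strictly increasing positive integers) and $j_k\le n-(m-k)=d-1+k$ (so $j_k-d+1\le k$); moreover $\mu_{\mathrm{id}}=\mu_H$. It therefore suffices to show
\[
  \mu_\sigma\;<_{\mathrm{grevlex}}\;\mu_H\qquad\text{for every admissible }\sigma\ne\mathrm{id},
\]
because this forces the coefficient of $\mu_H$ in $\det\vec{X}_H$ to be $\operatorname{sgn}(\mathrm{id})=1$ and every other monomial of $\det\vec{X}_H$ to be grevlex-smaller than $\mu_H$.

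This last inequality I would get from an exchange argument, and I expect it to be the main obstacle --- it is the one place where the grevlex order and the banded shape of $\vec{X}$ really have to fit together. Given an admissible $\sigma\ne\mathrm{id}$, pick an adjacent descent $\sigma(k)>\sigma(k+1)$ and let $\sigma'$ be $\sigma$ with its values at positions $k$ and $k+1$ interchanged. Since both endpoints of the admissibility window $[\max(1,j_k-d+1),\min(m,j_k)]$ are nondecreasing in $k$, a short check shows $\sigma'$ is again admissible, with one inversion fewer than $\sigma$. Writing $a=j_k<b=j_{k+1}$ and $p=\sigma(k)>q=\sigma(k+1)$, the monomials $\mu_\sigma$ and $\mu_{\sigma'}$ agree except that the factor pair $x_{a-p+1}x_{b-q+1}$ of $\mu_\sigma$ is replaced, in $\mu_{\sigma'}$, by $x_{a-q+1}x_{b-p+1}$; because $a<b$, $p>q$ and $(a-p+1)+(b-q+1)=(a-q+1)+(b-p+1)$, both new indices $a-q+1$ and $b-p+1$ lie strictly between the two old indices. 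Hence $\mu_{\sigma'}$ arises from $\mu_\sigma$ by replacing a factor $x_u x_v$ with $u<v$ by $x_{u'}x_{v'}$ with $u<u',v'<v$ and $u+v=u'+v'$; the largest index at which the exponent vectors of $\mu_\sigma$ and $\mu_{\sigma'}$ differ is then $v\le d$, where $\mu_{\sigma'}$ has exponent smaller by $1$, so $\mu_{\sigma'}>_{\mathrm{grevlex}}\mu_\sigma$ strictly by the definition of grevlex. Sorting $\sigma$ down to the identity one adjacent descent at a time produces a strictly grevlex-increasing chain of monomials terminating at $\mu_{\mathrm{id}}=\mu_H$, so $\mu_\sigma<_{\mathrm{grevlex}}\mu_H$, as needed. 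Beyond this exchange step --- verifying that the swap preserves admissibility, that the increase is strict, and that $H\mapsto\mu_H$ is the claimed bijection --- only routine bookkeeping remains.
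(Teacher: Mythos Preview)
Your proposal is correct and follows essentially the same route as the paper: identify the grevlex leading term of each maximal minor as the product of its diagonal entries via an adjacent-swap exchange argument, observe that these diagonal products exhaust all monomials of degree $n-d+1$, and deduce the Gr\"obner property from that. The only cosmetic differences are that the paper takes the variable order $x_d>\cdots>x_1$ rather than your $x_1>\cdots>x_d$ (the diagonal product is the leading term under either convention, by the reversal symmetry of $\vec{X}$), phrases the exchange on the column sequence $c_i$ rather than on the permutation $\sigma$, and invokes the Buchberger criterion where you argue directly that $\mathrm{in}(I)=\mathfrak{m}^{n-d+1}$.
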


Let $\vec{X}$ be the $(n-d+1)\times (n+1)$ matrix
\[ \vec{X} = \begin{bmatrix}
    x_1 & x_2 & \ddots & x_{d} & 0 & 0 & \ddots & 0 \\
    0   & x_1 & x_2 & \ddots & x_{d} & 0 & \ddots & 0 \\    
    \ddots & \ddots & \ddots & \ddots & \ddots & \ddots &\ddots&\ddots\\
    0   & 0  & 0    & \ddots & \ddots & \ddots & x_{d} & 0\\
    0   & 0  & 0    & \ddots & \ddots & \ddots & x_{d-1} & x_{d}
  \end{bmatrix}
\]

Let us recall that $\mathcal{P}_{n-d+1}^n$ is the family of all
subsets $H\subset \{1,2,\ldots,n\}$ of cardinality $n-d+1$.  Also,
$\vec{X}_H$ is the submatrix of $\vec{X}$ with columns whose indices
are in $H$. The set of minors
\[ \mathcal{X} = \left\{\det\vec{X}_H\,:\, H\in\mathcal{P}_{n-d+1}^n \right\} \]
is a family of homogeneous polynomials of degree $n-d+1$
in variables $x_1,x_2,\ldots,x_d$.

We will use some basic techniques from computational algebraic
geometry.  One is that of a \textbf{monomial order} or \textbf{term
  order}. The only monomial order relevant to this paper is the
\textbf{graded reverse lexicographic order} also abbreviated as
\textbf{grevlex}.  We assume that every polynomial has its terms ordered
in descending (grevlex) order. The largest term (monomial) is
called the \textbf{leading term (monomial)} of a polynomial, and will be
denoted $LM(f)$ for a polynomial $f$.  The family $LM(\mathcal{X})$ is
the set of all leading monomials $LM(f)$ of all elements
$f \in \mathcal{X}$.

\begin{lemma}
  \label{lemma:leading-monomials}
  The set $LM(\mathcal{X})$ is the set of \textbf{all} monomials of degree
  $n-d+1$ in variables $x_1,x_1,x_2,\ldots,x_d$. In particular, both
  $LM(\mathcal{X})$ and $\mathcal{X}$ itself are both bases of the
  linear vector space $\FF_{n-d+1}[x_1,x_2,\ldots,x_d]$ of homogeneous
  polynomials of degree $n-d+1$.
\end{lemma}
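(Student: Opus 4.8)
The plan is to compute the grevlex leading monomial of each single minor $\det\vec{X}_H$ explicitly, observe that these leading monomials are pairwise distinct as $H$ ranges over $\mathcal{P}^n_{n-d+1}$ and that together they exhaust all monomials of degree $n-d+1$ in $x_1,\ldots,x_d$, and then deduce the two basis statements from a dimension count. So the whole of Lemma~\ref{lemma:leading-monomials} reduces to one local computation: the leading monomial of a single $\det\vec{X}_H$.

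Fix $H=\{h_1<h_2<\cdots<h_{n-d+1}\}\in\mathcal{P}^n_{n-d+1}$. Adopting the convention $x_\ell=0$ for $\ell\notin\{1,\ldots,d\}$, the entry of $\vec{X}_H$ in row $i$ and column position $j$ is $x_{h_j-i+1}$, so the term of $\det\vec{X}_H$ attached to a permutation $\sigma$ of $\{1,\ldots,n-d+1\}$ is $\operatorname{sgn}(\sigma)\prod_i x_{h_{\sigma(i)}-i+1}$, and it is nonzero exactly when $1\le h_{\sigma(i)}-i+1\le d$ for every $i$; call such a $\sigma$ \emph{admissible}. The identity is admissible — here one uses $i\le h_i\le d-1+i$, the left inequality because $h_i$ is the $i$-th smallest of $n-d+1$ distinct positive integers, and the right one because $h_i$ is also at most $n-(n-d+1-i)=d-1+i$ — and it yields the monomial $\mu_H:=\prod_{j}x_{h_j-j+1}$, a genuine monomial in $x_1,\ldots,x_d$.

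The crux is to show that, among admissible permutations, the identity is the \emph{strict} grevlex-maximizer of the term monomial, so that $LM(\det\vec{X}_H)=\mu_H$ with coefficient $+1$ (in particular no cancellation occurs at the top). I would prove this by an exchange argument on adjacent inversions. If $\sigma\neq\mathrm{id}$ is admissible, choose $i$ with $p:=\sigma(i)>q:=\sigma(i+1)$ and let $\tau$ be $\sigma$ with these two values transposed. Only two factors change: $\sigma$ contributes the index pair $\{h_p-i+1,\;h_q-i\}$ and $\tau$ contributes $\{h_q-i+1,\;h_p-i\}$; the two pairs have the same sum, the chain $h_q-i<h_q-i+1\le h_p-i<h_p-i+1$ holds (using $h_p\ge h_q+1$), and the unique maximum $h_p-i+1$ lies in the $\sigma$-pair. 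Hence the exponent vector of $\mu_\tau$ minus that of $\mu_\sigma$ has last nonzero coordinate $-1$, at position $h_p-i+1$, so $\mu_\tau>_{\mathrm{grevlex}}\mu_\sigma$ strictly; the same chain of inequalities shows that $\tau$ is again admissible. Since the transposition lowers the number of inversions by exactly one, induction on that number gives $\mu_\sigma<_{\mathrm{grevlex}}\mu_H$ for every admissible $\sigma\neq\mathrm{id}$. I expect this exchange step, together with the accompanying check that admissibility is preserved, to be the only real obstacle.

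It then remains to harvest the conclusion. The map $H\mapsto(h_1-1,h_2-2,\ldots,h_{n-d+1}-(n-d+1))$ is a bijection from $\mathcal{P}^n_{n-d+1}$ onto the weakly increasing length-$(n-d+1)$ sequences with entries in $\{0,1,\ldots,d-1\}$ (monotonicity from $h_{j+1}\ge h_j+1$, the bounds from $h_j\ge j$ and $h_j\le d-1+j$), hence onto the monomials of degree $n-d+1$ in $x_1,\ldots,x_d$, and it carries $H$ precisely to $\mu_H$. Therefore the $LM(\det\vec{X}_H)$ are pairwise distinct and their set is \emph{all} monomials of degree $n-d+1$, which is the first assertion of Lemma~\ref{lemma:leading-monomials}. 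Finally, polynomials with pairwise distinct leading monomials are linearly independent, and their number $\binom{n}{n-d+1}=\binom{n}{d-1}$ equals $\dim\FF_{n-d+1}[x_1,\ldots,x_d]$; hence both $\mathcal{X}$ and $LM(\mathcal{X})$ are bases of that space, which is the second assertion.
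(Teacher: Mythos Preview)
Your proof is correct and follows essentially the same route as the paper: Leibniz expansion of $\det\vec{X}_H$, an adjacent-swap argument showing that the sorted (identity) arrangement strictly dominates all other admissible terms in grevlex, and the bijection between column sets $H$ and degree-$(n-d+1)$ monomials via $h_j\mapsto h_j-j$. Your framing in terms of permutations $\sigma$ rather than unsorted column sequences $c_i$ is a cosmetic repackaging of the paper's argument, and your uniform treatment of the swap step (``last nonzero coordinate of the exponent difference is $-1$ at position $h_p-i+1$'') is a slightly cleaner way of saying what the paper splits into the two cases $|J|=3$ and $|J|=4$.
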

\begin{proof}
  Let $H=\{c_1,c_2,\ldots,c_{n-d+1}\}$ be the set of distinct $n-d+1$
  column indices ($1\leq c_j\leq n$), where the sequence $c_j$ is non
  necessarily increasing.  Using Leibniz formula for determinants
  \cite{leibniz-formula-for-determinants}, we know that the minor is
  a sum of products, up to the sign:
  \[ x_1^{k_1}\,x_2^{k_2}\cdots x_d^{k_d} \]
  where for $j=1,2,\ldots,d$:
  \[ k_j = \#\{ i\,:\, 1 \leq i \leq n-d+1,\; c_{i}-i = j-1 \}. \]
  We require that for $i=1,2,\ldots,n-d+1$.
  \begin{equation}
    \label{eqn:c-condition}
    0 \le c_{i}-i \le d-1.
  \end{equation}
  Essentially, $k_j$ represent the frequency table of the sequence
  $c_i-i+1$, $i=1,2\ldots,n-d+1$. Besides $k_j\ge 0$, we also must have
  \begin{equation}
    \label{eqn:k-condition}
    \sum_{j=1}^d k_j = n-d+1.
  \end{equation}

  Moreover, $j=c_i-i+1$ represents
  the horizontal distance from the diagonal, of the entry at the
  intersection of row $i$ with column $c_i$.  In addition, if we
  reorder the sequence $c_i$ to a sequence $c_i'=c_{\rho(i)}$, where
  $\rho:\{1,2,\ldots,n-d+1\}\to\{1,2,\ldots,n-d+1\}$ is a permutation,
  while preserving condition~\eqref{eqn:c-condition}, then the new
  sequence $k_j'$ represents another monomial in the same minor
  $\det\vec{X}_H$.  The question arises: which $\rho$ yields the
  monomial maximal in the grevlex order?  There are two objectives to
  achieve by reordering the sequence $c_j$:
  \begin{enumerate}
  \item Let $k_j>0$ be the \textbf{last} $j$ with this property,
    i.e. $k_{j+1},k_{j+2},\ldots,k_d$ are all zero; we make $j$ the
    \textbf{lowest possible}. We recall that grevlex considers
    variables to be ordered from the highest, i.e.
    $x_d>x_{d-1}>\ldots>x_1$; therefore, a monomial which has variable
    $x_j$ but no variables $x_{j+1}, x_{j+2},\ldots,x_{d}$, is
    \textbf{greater} than all polynomials which have one of those
    variables.
  \item Once we achieved the first goal, we \textbf{minimize} $k_j$
    i.e. the power of $x_j$; this is also the count of $i$ such that
    $c_i-i = j-1$. The grevlex order considers the term with the
    \textbf{lower} power $k_j$ greater.
  \end{enumerate}
  We claim that the optimal ordering is achieved when the sequence
  $c_i$ is strictly increasing. The remainder of the current proof
  is devoted to the proof of this claim.

  We note that $c_i$ is strictly increasing iff the sequence
  $j_i=c_i-i+1$ is non-decreasing.  Indeed
  \[ j_{i+1} - j_i = (c_{i+1}-(i+1)+1)-(c_i-i+1) = c_{i+1}-c_{i}-1 \ge 0.\]
  The numbers $k_j$ are simply the counts of the number of times $i$ such that $j_i=j$,
  i.e. level counts of the non-decreasing sequence $j_i$.
  Obviously, knowing $k_j$ for $j=1,2,\ldots,d$ such that
  $\sum_{j=1}^d k_j=n-d+1$ allows us to reconstruct the non-decreasing
  sequence $j_i$, $i=1,2,\ldots,n-d+1$ uniquely.
  Then the equation $j_i=c_i-i+1$ allows us to reconstruct $c_i$. Moreover,
  $c_{i+1}-c_{i}=j_{i+1}-j_i+1$, so $c_i$ is strictly increasing if $j_i$ is non-decreasing.
  Thus, we established a 1:1 correspondence between sequences $(k_j)_{j=1}^d$,
  $(j_i)_{i=1}^{n-d+1}$ and $(c_i)_{i=1}^{n-d+1}$ with the specified properties.

  It remains to be shown that the monomial obtained from an unsorted
  sequence $c_i$ is strictly smaller in grevlex order than that obtained
  from the sorted sequence. Thus, let us assume that
  the sequence
  $j_i = c_i-i+1$ is not a non-decreasing sequence; hence, for some $i$, $1\le i <n-d+1$
  $j_{i+1} < j_{i}$ or $j_{i+1}\le j_i-1$.
  Equivalently
  \[ c_{i+1}-(i+1)+1 \le (c_i-i+1)-1. \]
  Hence $c_{i+1}\le c_i$. Thus $c_{i+1}<c_{i}$ and $j_{i+1} \le j_i-2$.
  Let us consider the sequence $c_\ell'$, $\ell=1,2,\ldots,n-d+1$,
  obtained by swapping $c_i$ and $c_{i+1}$. We will prove that by
  doing so we increased the monomial $x_1^{k_1}\,x_2^{k_2}\cdots x_d^{k_d}$.
  Thus
  \[ c_{\ell}' =\begin{cases}
      c_\ell & \text{if $\ell\notin\{i,i+1\}$,}\\
      c_{i+1} & \text{if $\ell=i$,}\\
      c_{i} & \text{if $\ell=i+1$.}
    \end{cases}
  \]
  Therefore $j_\ell' = c_\ell'-\ell+1$ is given by:
  \[ j_{\ell}' = \begin{cases}
      j_\ell & \text{if $\ell\notin\{i,i+1\}$,}\\
      j_{i+1}+1 & \text{if $\ell=i$,}\\
      j_i-1\ & \text{if $\ell=i+1$.}
    \end{cases}
  \]
  Indeed,
  \[j_i' = c_i'-i+1 = c_{i+1}-(i+1)+2 = j_{i+1}+1\]
  and
  \[ j_{i+1}' = c_{i+1}'-(i+1)+1 = (c_i-i+1)-1 = j_i-1.\]
  As  $j_{i+1}+2 \le j_i$ and $j_i-1 \ge j_{i+1}+1$, all values remain in
  the range from $1$ to $d$, and the sequence $c_\ell'$ is valid
  (satisfies $1\leq c_\ell'-\ell+1 \leq d$).
  The value of $k_j$ does not change
  unless $j\in J$, where
  \[ J=\{j_{i+1},j_{i+1}+1,j_{i}-1,j_{i}\}.\]
  As $j_{i+1}+1\leq j_{i}-1$, we have
  \[j_{i+1}< j_{i+1}+1 \leq j_{i}-1 < j_i\]
  the set $J$ consists of either $3$ or $4$ values. It is
  easy to see how values $k_j$ change for $j\in J$. There are 2 cases.

  \noindent\textbf{Case $j_i -1 = j_{i+1}+1$:} when $J$ consists of $3$ elements.
  Then
  \begin{align*}
    k_{j_{i+1}}' &= k_{j_{i+1}} - 1,\\ 
    k_{j_i-1}'&=k_{j_i-1}+2,\\
    ( k_{j_{i+1}+1}'&=k_{j_{i+1}+1}+2 )\\
    k_{j_i}'&=k_{j_i}-1.
  \end{align*}
  The parenthesized third equation is a copy of the second equation.

  \noindent\textbf{Case $j_{i+1}+1 < j_i-1$:} when $J$ consists of $4$ elements.
  Then
  \begin{align*}
    k_{j_{i+1}}' &= k_{j_{i+1}} - 1,\\ 
    k_{j_i-1}'&=k_{j_i-1}+1, \\
    k_{j_{i+1}+1}'&=k_{j_{i+1}+1}+1,\\
    k_{j_i}'&=k_{j_i}-1.
  \end{align*}
  Since the power of the highest-indexed impacted
  variable $x_{j_i}$ drops, the new sequence $k_\ell'$,
  $\ell=1,2,\ldots,n-d+1$ corresponds to a monomial which is
  \textbf{strictly greater} in grevlex order. This
  proves that the maximum monomial is obtained when $c_\ell$ is sorted 
  in ascending order.

  This completes the proof of the main claim, and the entire lemma.
\end{proof}

\begin{remark}
  It follows that $LM\left(\det\vec{X}_H\right)$ with respect to the grevlex
  order is the product of the diagonal entries of $\vec{X}_H$.
\end{remark}

\begin{remark}
  The set $\mathcal{X}$ is a Gr\"obner basis of the ideal generated by
  $\mathcal{X}$.  This follows from
  Lemma~\ref{lemma:leading-monomials} and the Buchberger
  criterion. This ideal is also $\mathfrak{m}^{n-d+1}$ where
  $\mathfrak{m} = \langle x_1,x_2,\ldots,x_d\rangle$ is the maximal
  ideal generated by $x_1,x_2,\ldots,x_d$ and is the ideal of
  homogeneous polynomials vanishing at $0$.
\end{remark}

\begin{example}
  We give a counterexample to the conjecture that the grevlex order
  could be substituted with the (graded) lexicographic order
  (lex or grlex). Let the matrix be:
  \[ \vec{X} = \begin{bmatrix}
      x_1 & x_2 & x_3 & 0 \\
      0   & x_1 & x_2 & x_3
    \end{bmatrix}.
  \]
  Thus $d=3$ and $n=4$. We find:
  \[ \mathcal{X} = \left\{x_1^2, x_1\,x_2, x_1\,x_3,x_2^2-x_1\,x_3,x_2\,x_3,x_3^2\right\}.\]
  We also find that with grlex or lex used instead of grevlex:
  \[ LM_{lex}(\mathcal{X}) = \left\{x_1^2,x_1\,x_2,x_1\,x_3,x_2\,x_3,x_3^2\right\}. \]
  Notably, $x_2^2$ is missing. We note that $x_2^2<x_1\,x_3$ with respect to the (graded)
  lexicographic order, but $x_2^2 > x_1\,x_3$ with respect to grevlex. Therefore,
  \begin{align*}
    LM_{grlex}\left(x_2^2-x_1\,x_3\right)&=x_1\,x_3,\\
    LM_{grevlex}\left(x_2^2-x_1\,x_3\right)&=x_2^2.\\
  \end{align*}
  Hence, consistent with the statement of Lemma~\ref{lemma:leading-monomials},
  \[ LM_{grevlex}(\mathcal{X}) = \left\{x_1^2,x_1\,x_2,x_1\,x_3,x_2^2,x_2\,x_3,x_3^2\right\}. \]
\end{example}
\section{Generalizations}
The assumption that $\FF$ is an infinite field can be relaxed.
If we assume that $\FF$ is an infinite integral domain then
the only identically vanishing polynomial is the zero polynomial.
This is sufficient to prove Theorem~\ref{theorem:main}.

Theorem~\ref{theorem:grobner-theorem} and
Lemma~\ref{lemma:leading-monomials} are valid over any ring with unity.

\section{Acknowledgments}
I thank Johnatan Ashbrock for the question that led to
Theorem~\ref{theorem:main}.

\bibliographystyle{plain}
\bibliography{db.bib}

\begin{thebibliography}{1}

\bibitem{becker-weispfenning}
Thomas Becker and Volker Weispfenning.
\newblock {\em Gr{\"o}bner Bases}.
\newblock Springer, 1993.

\bibitem{cox-little-oshea}
David~A. Cox, John Little, and Donal O{\rq}Shea.
\newblock Springer, 2012.

\bibitem{laplace-expansion}
Wikipedia.
\newblock Laplace expansion of a determinant by complementary minors.
\newblock
  \href{https://en.wikipedia.org/wiki/Laplace\_expansion\#Laplace\_expansion\_of\_a\_determinant\_by\_complementary\_minors}{
  Laplace expansion of a determinant by complementary minors}, May 2019.

\bibitem{leibniz-formula-for-determinants}
Wikipedia.
\newblock Leibniz formula for determinants.
\newblock
  \href{https://en.wikipedia.org/wiki/Leibniz\_formula\_for\_determinants}{Leibniz
  formula for determinants}, May 2019.

\bibitem{toeplitz-matrix}
Wikipedia.
\newblock Toeplitz matrix.
\newblock \href{https://en.wikipedia.org/wiki/Toeplitz\_matrix}{Toeplitz
  Matrix}, May 2019.

\end{thebibliography}

\end{document}
